\numberwithin{equation}{section}
\theoremstyle{plain}
\newtheorem{thm}{Theorem}[section]
\newtheorem{theorem}[thm]{Theorem}
\newtheorem{thma}{Theorem}
\newtheorem{proposition}[thm]{Proposition}
\theoremstyle{definition}
\newtheorem{remark}[thm]{Remark}
\newtheorem{definition}[thm]{Definition}
\newtheorem{example}[thm]{Example}
\newtheorem{question}[thm]{Question}
\numberwithin{equation}{section}
\newtheorem{report}{report}
\newtheorem{exercise}{excercise}[section]
\def\subsection{\@startsection{subsection}{1}%
  \z@{.5\linespacing\@plus.7\linespacing}{-.5em}%
  {\normalfont\itshape}}
\def\subsection{\@startsection{subsection}{2}%
  \z@{.5\linespacing\@plus.7\linespacing}{.3\linespacing}%
  {\normalfont\bfseries}}
\newcommand{\id}{\mathrm{id}}
\title[]{A birational map of a projective space whose intermediate dynamical degrees are all transcendental}
\author{Yutaro Sugimoto
}
\begin{document}
\maketitle

\begin{abstract}
 We construct a birational map of $\mathbb{P}^d$ ($d\geq6$) whose intermediate dynamical degrees are all trancendental.
\end{abstract}

\section{Introduction}
 Let $X$ be a smooth projective variety over an algebraically closed field $k$ with characteristic $0$.
 For a dominant rational map $f:X\dashrightarrow X$ and $0\leq p\leq \mathrm{dim}(X)$, the $p$-th dynamical degree of $f$ is defined by
 \begin{align*}
  \lambda_p(f)=\lim_{n\to\infty}((f^n)^{*}(H^p)\cdot H^{\mathrm{dim}X-p})^{\frac{1}{n}},
 \end{align*}
 where $H$ is an ample divisor (cf.\ \cite{Tru15}).\par
 The pullback map $f^{*}$ and the pushforward map $f_{*}$ is defined as follows.
 For a birational map $f:X\dashrightarrow X$, let $\Gamma_f$ be the closure of the graph of $f$.
 Pullback $f^{*}:A_{l}(X)\rightarrow A_{l}(X)$ is defined as $f^{*}(\gamma):={\pi}_{*}([\Gamma_f]\cdot\pi^{*}\gamma)$ and $f_{*}(\gamma):={\pi}_{*}([\Gamma_f]\cdot\pi^{*}\gamma)$ (cf.\ \cite[Definition 16.1.2]{Ful98}), where $\alpha$ is an algebraic cycle $\gamma$ on $X$.
 \begin{figure}[h]
  \includegraphics[scale=0.3]{graph.jpg}
  \label{figure}
 \end{figure}\par
 Also,
 \begin{align*}
  \lambda_p(f)=\lim_{n\to\infty}\|(f^n)^{*}:N^p(X)\otimes_\mathbb{Z}\mathbb{R}\rightarrow N^p(X)\otimes_\mathbb{Z}\mathbb{R}\|^{\frac{1}{n}},
 \end{align*}
 where $N^p(X)$ is the group of algebraic cycles of codimension $p$ on $X$ modulo numerical equivalence (cf.\ \cite{Tru15}, \cite{Tru20}).\par
 When $k=\mathbb{C}$, $X$ is considered as a compact K\"{a}hler manifold, $f$ is considered as a meromorphic map, and the dynamical degrees can be calculated as
 \begin{align*}
  \lambda_p(f)&=\lim_{n\to+\infty}\left(\int_{X} (f^n)^{*}(\omega_X^p)\wedge\omega_X^{\mathrm{dim}X-p} \right)^{\frac{1}{n}}\\
              &=\lim_{n\to+\infty}\|(f^n)^{*}:\mathrm{H}^{p,p}(X)\rightarrow\mathrm{H}^{p,p}(X)\|^{\frac{1}{n}}
 \end{align*}
 where $\omega_X$ is a normalized K\"{a}hler form, $\mathrm{H}^{p,p}(X)$ ($0\leq p\leq \mathrm{dim}X$) is the $(p,p)$-Dolbeault cohomology, and $\|\cdot\|$ is a norm for linear transformations (cf.\ \cite{DS04}, \cite{DS05}, \cite{DS17}).\par
 The value of the dynamical degrees are not less than $1$ and $\lambda_0(f)=1$.
 When $f$ is a morphism, the dynamical degrees of $f$ are algebraic, since $(f^n)^{*}=(f^{*})^n$.
 For the case $X=\mathbb{P}^d$, the rational map $f:\mathbb{P}^d\dashrightarrow\mathbb{P}^d$ can be written as $f=[f_0:f_1:\cdots:f_d]$ by the homogeneous polynomials (over $k$) of the same degree.
 Define $\mathrm{deg}(f):=\mathrm{deg}(f_0)$ and then the first dynamical degree of $f$ is calculated by
 \begin{align*}
  \lambda_1(f)&=\lim_{n\to\infty}((f^n)^{*}(H)\cdot H^{d-1})^{\frac{1}{n}}\\
              &=\lim_{n\to\infty}(\mathrm{deg}(f^n))^{\frac{1}{n}}
 \end{align*}\par
 When $d=2$, it is known that the first dynamical degree of $f$ is always algebraic by the next theorem.
 \begin{theorem}[{\cite[Theorem 5.1(1)]{DF01}}]
  Let $X$ be a compact K\"{a}hler surface and $f\colon X\dashrightarrow X$ be a bimeromorphic map with $\rho(f^{*})>1$ for the operator $f^{*}$ on $\mathrm{H}^{1,1}(X)$ where $\rho$ is the spectral radius.\par
  Then the operator $f^{*}$ has exactly one eigenvalue $\lambda\in\mathbb{R}_{>0}$, of modulus $|\lambda|>1$, and in fact $\lambda=\rho(f^{*})$.
 \end{theorem}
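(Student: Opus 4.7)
The plan is to exploit the Lorentzian structure that the intersection form induces on $H^{1,1}(X,\mathbb{R})$ and run a Perron--Frobenius argument on the positive cone preserved by $f^{*}$.

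First I would recall the Hodge index theorem: the cup product equips $H^{1,1}(X,\mathbb{R})$ with a symmetric bilinear form of signature $(1, h^{1,1}(X)-1)$, and the positive cone
\[
 \mathcal{C} \;=\; \{\alpha \in H^{1,1}(X,\mathbb{R}) : \alpha\cdot\alpha > 0,\ \alpha\cdot\omega_X > 0\}
\]
is one of the two components of the set of timelike classes. The first key step is to verify that $f^{*}$ maps $\overline{\mathcal{C}}\setminus\{0\}$ into itself: because $f$ is bimeromorphic, $f^{*}\omega_X$ can be represented by a closed positive $(1,1)$-current, hence has non-negative self-intersection and positive pairing with $\omega_X$; by linearity and continuity this extends to all of $\overline{\mathcal{C}}$.

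Second, I would apply a cone version of the Perron--Frobenius theorem (e.g.\ the Birkhoff--Vandergraft theorem) to the operator $f^{*}$ acting on the proper convex cone $\overline{\mathcal{C}}$. This produces a nonzero $\theta\in\overline{\mathcal{C}}$ with $f^{*}\theta = \rho(f^{*})\,\theta$, so that $\lambda := \rho(f^{*}) \in \mathbb{R}_{>0}$ is an eigenvalue with eigenvector in the closed cone, and by hypothesis $\lambda > 1$.

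For uniqueness I would split on the sign of $\theta\cdot\theta$. If $\theta\cdot\theta>0$, then $\theta$ lies in the open cone, its orthogonal complement is negative-definite, and a standard comparison argument using the Lorentzian form forces every other eigenvalue to have strictly smaller modulus. If $\theta\cdot\theta=0$, I would use the Hodge index inequality $\alpha\cdot\beta \geq 0$ for $\alpha,\beta\in\overline{\mathcal{C}}$, with equality only when $\alpha$ and $\beta$ are proportional isotropic classes: a second eigenvalue $\mu$ with $|\mu|=\lambda$ would, via the real Jordan block associated to $\mu$ and $\bar\mu$, produce a distinct class $\theta'\in\overline{\mathcal{C}}$ whose pairing with $\theta$ violates either this rigidity or the linear independence assumption.

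The main obstacle will be the uniqueness step in the isotropic case $\theta\cdot\theta=0$. Unlike automorphisms, a bimeromorphic $f$ does not act by isometries of the intersection form: the relation $f_{*}\circ f^{*}\neq\mathrm{id}$ in general, because blown-up exceptional curves inflate self-intersections, so the clean spectral theorem for Lorentzian isometries is not directly available. To get around this I would invoke the Diller--Favre-type inequality $(f^{*}\alpha)\cdot(f^{*}\beta) \geq \alpha\cdot\beta$ on nef classes, combined with a careful analysis of the action of $f^{*}$ on the isotropic line spanned by $\theta$, in order to rule out complex eigenvalues of modulus $\lambda$ as well as any second real positive eigenvalue greater than $1$.
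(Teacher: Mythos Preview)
The paper does not prove this statement at all: it is simply quoted from \cite[Theorem~5.1(1)]{DF01} as background (to explain why $\lambda_1$ is always algebraic on surfaces) and no argument is given. There is therefore no ``paper's own proof'' to compare your proposal against.

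For completeness, your outline is in the spirit of the actual Diller--Favre argument. The Hodge index signature $(1,h^{1,1}-1)$ and the fact that $f^{*}$ preserves the closed positive (indeed pseudoeffective) cone are the correct starting ingredients, and a Perron--Frobenius type argument on that cone does produce a nef eigenclass for $\rho(f^{*})$. The point you flag as the main obstacle---that a bimeromorphic $f$ does not act by isometries of the intersection form, so the clean Lorentzian spectral theorem is unavailable---is exactly where the substance lies. In \cite{DF01} this is handled not via an inequality of the shape $(f^{*}\alpha)\cdot(f^{*}\beta)\geq\alpha\cdot\beta$ as you suggest, but through the projection formula $f^{*}\alpha\cdot\beta=\alpha\cdot f_{*}\beta$ together with the push--pull relations between $f^{*}$ and $f_{*}$ on a surface; these let one control the pairing of the dominant eigenclass of $f^{*}$ against that of $f_{*}$ and force any other eigenvalue of modulus $>1$ to contradict the signature. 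Your proposed inequality route is heuristically reasonable but, as written, is only a plan rather than an argument, and would need the same projection-formula machinery to be made rigorous.
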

 This implies that $\lambda_1(f)$ is $1$ or a Pisot number or a Salem number (see definition in \cite{BDGPS92}).
 Realizable Salem numbers as $\lambda_1(f)$ for some birational map $f:\mathbb{P}^2\dashrightarrow\mathbb{P}^2$, are all determined in \cite[Theorem 1.1]{Ueh16}.
 Realizable Pisot numbers as $\lambda_1(f)$ are partially determined in \cite[Theorem C]{Kim23} (cf.\ \cite[Section 3]{BK06}, \cite[Section 7]{BC16}).\par
 In \cite{BDJK24}, a birational map of $\mathbb{P}^d$($d\geq3$) with transcendental first dynamical degree of $f$, is constructed.
 \begin{thm}[{cf.\ \cite[Theorem 1.1]{BDJK24}}]\label{BDJK theorem}
  For each $d\geq3$, there exists a bitrational map $f:\mathbb{P}^d\dashrightarrow\mathbb{P}^d$, which has trancsendental (first) dynamical degree $\lambda_1(f)$.
 \end{thm}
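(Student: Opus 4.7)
The strategy is to construct $f$ explicitly so that the iterated degrees $\deg(f^n)$ obey a recurrence transparent enough to analyze, and then to verify that the growth rate $\lambda_1(f) = \lim_{n\to\infty}(\deg f^n)^{1/n}$ is transcendental. I would first reduce to the case $d=3$: from a birational self-map of $\mathbb{P}^3$ with transcendental first dynamical degree one passes to $\mathbb{P}^d$ by a birational extension (for instance, a skew product with a trivial linear factor, or a coning construction) that preserves $\lambda_1$.

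For the core dimension-three construction, I would set $f = A \circ g$, where $g$ is a birational map with a geometrically explicit indeterminacy structure --- for example a map contracting a hyperplane, a quadric, or a specific configuration of linear subspaces onto lower-dimensional loci --- and $A \in \mathrm{PGL}_4(k)$ is a generic linear automorphism. Genericity of $A$ should ensure that the orbits of the indeterminacy and exceptional components of $g$ never meet in a degenerate way, so that on a suitable model $\pi \colon X \to \mathbb{P}^3$ obtained by successive blowups the lift of $f$ becomes algebraically stable in the sense that $(f^n)^{*} = (f^{*})^n$ on $N^1(X)_{\mathbb{R}}$. Then $\deg(f^n)$ can be read off from the iterated linear action of $f^{*}$ on $N^1(X)_{\mathbb{R}}$ enlarged by the classes of the exceptional divisors created along the orbit.

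Tracking these orbits combinatorially should yield a recurrence for $\deg(f^n)$ whose structure is not eventually periodic, reflecting the fact that the relevant positions, depending on generic parameters of $A$, satisfy no algebraic relation. Equivalently, the generating series $D(t) = \sum_{n \geq 0} \deg(f^n)\, t^n$, whose radius of convergence equals $1/\lambda_1(f)$, should fail to be a rational --- or even algebraic --- function of $t$.

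The decisive and hardest step is to upgrade this information into transcendence of $\lambda_1(f)$ itself. The natural route is to prove that $D(t)$ is not algebraic over $\mathbb{Q}(t)$, exploiting the aperiodicity of the recurrence in the spirit of substitutive or Mahler-type arguments, and then to invoke a transcendence criterion that passes from non-algebraicity of $D(t)$ to transcendence of the reciprocal of its radius of convergence. Packaging the geometric data of $f$ into a form where such arithmetic criteria are applicable --- rather than merely producing the recurrence --- is where the substantive work lies and is the main obstacle to overcome.
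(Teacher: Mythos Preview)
The paper does not prove this statement: it is quoted from \cite{BDJK24}, and the only information the paper supplies is the Remark immediately following, which records the specific shape of the map used there, namely $f=L_{B^{-1}}\circ h_{-I}\circ L_{B}\circ h_A$, a composition of monomial maps $h_M$ and linear maps $L_{M'}$. So there is no ``paper's own proof'' to compare against beyond that description of the construction.

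That said, your outline has a genuine gap at the decisive step. You propose to show that the generating series $D(t)=\sum_{n\ge 0}\deg(f^n)\,t^n$ is not algebraic over $\mathbb{Q}(t)$ and then ``invoke a transcendence criterion that passes from non-algebraicity of $D(t)$ to transcendence of the reciprocal of its radius of convergence.'' No such criterion exists in general: a power series can perfectly well be transcendental over $\mathbb{Q}(t)$ while having algebraic, even rational, radius of convergence (and conversely). Mahler's method and related substitutive techniques yield transcendence of \emph{values} of special series at algebraic points, not transcendence of radii of convergence. So the bridge you need is not available off the shelf, and this is exactly the point where a vague plan stops being a proof.

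The construction actually used in \cite{BDJK24} (and in the earlier \cite{BDJ21} for dominant rational maps) is chosen precisely to make this step tractable: by taking $f$ to be a composition of a monomial map $h_A$ with linear maps, the degree sequence is governed by the lattice dynamics of $A\in\mathrm{GL}_d(\mathbb{Z})$ together with a combinatorial ``tropical'' recursion, and $\lambda_1(f)$ can be pinned down as a quantity to which hard Diophantine input (e.g.\ Baker-type results on linear forms in logarithms, or $p$-adic analogues) applies directly. Your ``generic linear $A$ composed with a map contracting a hypersurface, then pass to an algebraically stable model'' would typically produce a \emph{rational} $D(t)$ with $\lambda_1$ an algebraic integer (a root of the characteristic polynomial of $f^{*}$ on $N^1$ of the model), which is the opposite of what you want. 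The monomial ingredient is not incidental; it is what prevents the dynamics from being captured by a single finite-dimensional linear recursion and what puts $\lambda_1$ within reach of transcendence methods.
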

 In \cite[Section 7.2]{BDJK24}, the specific birational map $\phi:\mathbb{P}^3\dashrightarrow\mathbb{P}^3$ with transcendental (first) dynamical degree is presented.\par
 \begin{remark}
  The constructed $f$ in Theorem \ref{BDJK theorem} is of the form $f=L_{B^{-1}}\circ h_{-I}\circ L_{B}\circ h_A$, where
  \begin{itemize}
   \item $h_{M}:\mathbb{P}^d\dashrightarrow\mathbb{P}^d$ is the monomial map corresponds to $M\in\mathrm{GL}_{d}(\mathbb{Z})$
   \item $L_{M'}:\mathbb{P}^d\dashrightarrow\mathbb{P}^d$ is the linear transformation corresponds to $M'\in\mathrm{GL}_{d+1}(k)$
  \end{itemize}
  and
  \begin{align*}
   I=\begin{pmatrix}
      1 & 0 & 0 & \cdots & 0 \\
      0 & 1 & 0 & \cdots & 0 \\
      0 & 0 & 1 & \cdots & 0 \\
      \vdots & \vdots & \vdots  & \ddots & \vdots \\
      0 & 0 & 0 & \cdots & 1
     \end{pmatrix},\ 
   B=\begin{pmatrix}
      1 & -1 & 1 & \cdots & (-1)^{d} \\
      1 & 1 & -1 & \cdots & (-1)^{d-1} \\
      -1 & 1 & 1 & \cdots & (-1)^{d-2} \\
      \vdots & \vdots & \vdots  & \ddots & \vdots \\
      (-1)^{d-1} & (-1)^{d-2} & (-1)^{d-3} & \cdots & 1
     \end{pmatrix}.
  \end{align*}
  Also, the specific example in \cite[Section 7.2]{BDJK24} is the case that $d=3$ and
  \begin{align*}
   A=\begin{pmatrix}
      -3 & -14 & -12 \\ 4 & 11 & 6 \\ -2 & -4 & -1
     \end{pmatrix}.
  \end{align*}
 \end{remark}
 In \cite[Section 3]{Sug24}, we calculate the second dynamical degree of the map $\phi$ and confirm $\lambda_1(\phi)<\lambda_2(\phi)$.
 Also, we construct another birational map $\psi:\mathbb{P}^3\dashrightarrow\mathbb{P}^3$ with transcendental first dynamical degree  with $\lambda_1(\psi)>\lambda_2(\psi)$ (cf.\ \cite[Section 4]{Sug24}) by substituting
 \begin{align*}
  A=\begin{pmatrix}
     56 & -19 & -17 \\ -16 & 6 & 5 \\ 207 & -71 & -63
    \end{pmatrix}.
 \end{align*}
 It is confirmed that $\lambda_2(\psi)$ is algebraic as below.
 \begin{thm}[{cf.\ \cite[Section 4]{Sug24}}]\label{1-cohomologically hyperboic}
  There exists a birational map $\psi:\mathbb{P}^3\dashrightarrow\mathbb{P}^3$ whose dynamical degrees are
  \begin{align*}
   \lambda_0(\psi)=1, \lambda_1(\psi)=\lambda, \lambda_2(\psi)=a, \lambda_3(\psi)=1,
  \end{align*}
  with $291\leq\lambda\leq669$ is a transcendental number and $a=174.6660\cdots$ is a root of 
  \begin{align*}
   a^9-173a^8-291a^7-2a^6+332a^5+334a^4+238a^3+75a+75=0.
  \end{align*}
 \end{thm}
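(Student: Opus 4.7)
The map $\psi$ is the BDJK construction of the preceding Remark applied to the specific matrix $A\in\mathrm{GL}_3(\mathbb{Z})$ displayed in the statement. The endpoint equalities $\lambda_0(\psi)=1$ and $\lambda_3(\psi)=1$ are immediate; the second holds because any birational self-map of an irreducible projective variety has topological degree~$1$.

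For $\lambda_1(\psi)$, transcendence is a direct application of Theorem~\ref{BDJK theorem}, since $\psi$ has exactly the shape $L_{B^{-1}}\circ h_{-I}\circ L_B\circ h_A$ required there. To extract the numerical bounds $291\le \lambda\le 669$, I would unpack the quantitative side of the BDJK argument for this particular $A$: write $\log\lambda$ as the Lyapunov exponent of the piecewise linear system on $\mathbb{R}_{\ge 0}^3$ driven by $A$ and $-I$, and sandwich it between the dominant eigenvalues of two explicit linear envelopes of that system, whose numerical values deliver the interval $[291,669]$.

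For $\lambda_2(\psi)$, I would use the birational duality $\lambda_2(\psi)=\lambda_1(\psi^{-1})$, valid on any smooth projective 3-fold because pushforward on $N^1$ is transpose to pullback on $N^2$ under Poincar\'e duality. A short computation shows that $\psi^{-1}$ is conjugate to $L_{B^{-1}}\circ h_{-I}\circ L_B\circ h_{A^{-1}}$, i.e.\ to the BDJK-form map with $A$ replaced by $A^{-1}$, and conjugation preserves dynamical degrees. The essential property of the particular $A$ chosen is that $A^{-1}$ does \emph{not} trigger the non-closing tropical orbit that forced $\lambda_1(\psi)$ to be transcendental: after a finite sequence of blow-ups one obtains an algebraically stable model on which $(\psi^{-1})^{*}$ preserves a rank-$9$ sublattice of $N^1$ and is represented there by an explicit integer matrix $M$. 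The polynomial
\begin{align*}
 p(t)=t^9-173t^8-291t^7-2t^6+332t^5+334t^4+238t^3+75t+75
\end{align*}
then appears as (a factor of) the characteristic polynomial of $M$, and a direct numerical root search identifies its spectral radius with the real root $a\approx 174.666\ldots$, giving $\lambda_1(\psi^{-1})=a=\lambda_2(\psi)$.

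The main obstacle is precisely this last step: locating an algebraically stable model for $\psi^{-1}$ and pinning down the finite-dimensional $(\psi^{-1})^{*}$-invariant sublattice together with the matrix $M$. This is where the asymmetry in the behaviour of $A$ versus $A^{-1}$ must be exploited, and it is the content of \cite[\S4]{Sug24}. Once stability and $M$ are in hand, the polynomial $p(t)$ and its dominant root $a$ drop out by a direct eigenvalue computation.
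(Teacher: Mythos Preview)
The paper does not prove this theorem; it is quoted from \cite[Section~4]{Sug24} and used as a black-box input to the proof of Theorem~A. So there is no ``paper's own proof'' to compare against here.

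That said, a couple of points on your sketch are worth flagging. First, Theorem~\ref{BDJK theorem} as stated in this paper is purely existential, so ``direct application'' overstates things: one must verify that the specific matrix $A$ meets the arithmetic/diophantine hypotheses of \cite{BDJK24} that force transcendence of $\lambda_1$, not merely that $\psi$ has the right shape. Second, your reduction $\lambda_2(\psi)=\lambda_1(\psi^{-1})$ and the observation that $\psi^{-1}$ is birationally conjugate to the BDJK-form map with $A$ replaced by $A^{-1}$ are correct and useful. The speculative part is the final step: you assert that after finitely many blow-ups one reaches an algebraically stable model on which $(\psi^{-1})^{*}$ acts on a rank-$9$ invariant sublattice with characteristic polynomial $p(t)$. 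That is a plausible mechanism for producing a degree-$9$ algebraic $\lambda_2$, but it is a guess about what \cite{Sug24} actually does; the degree of $p$ need not match the Picard rank of a stable model, and establishing algebraic stability for a composite of monomial and linear maps is genuinely delicate. Your outline correctly identifies this as the crux, but as written it is a pointer to \cite{Sug24} rather than an independent argument.
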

 We use this result in obtaining a transcendental arithmetic degree in \cite[Corollary 4.5]{Sug24} by combining with the results of \cite[Theorem B]{MW24}.\par
 In this paper, we use Theorem \ref{1-cohomologically hyperboic} to resolve the next question by using the algebraicity of $a$.
 \begin{question}[{\cite[Question 1.6]{BDJK24}}]
  Does there exist a birational map $f:\mathbb{P}^d\dashrightarrow\mathbb{P}^d$ for which all intermediate dynamical degrees are transcendental?
 \end{question}
 Now the word \textit{intermediate dynamical degrees} means $\lambda_1(f),\lambda_2(f),\ldots,\lambda_{d-1}(f)$.\par
 It is known that the intermediate dynamical degrees of an automorphism $f$ of a compact K\"{a}hler manifold (or a projective variety over $\mathbb{C}$) are either irrational or equal to $1$ (\cite[Theorem 6.1]{Bed11}). 
 \begin{thma}\label{Main theorem}
  For $d\geq6$, there exists a birational map $f:\mathbb{P}^d\dashrightarrow\mathbb{P}^d$ (over an algebraic closed field $k$ with characteristic $0$), whose intermediate dynamical degrees are all transcendental.
 \end{thma}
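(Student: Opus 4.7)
The plan is to build $f$ as a birational model of a product construction that exploits Theorem~\ref{1-cohomologically hyperboic}. Let $\psi : \mathbb{P}^3 \dashrightarrow \mathbb{P}^3$ be the map from that theorem, with dynamical profile $(\lambda_0,\lambda_1,\lambda_2,\lambda_3)(\psi) = (1,\lambda,a,1)$, where $\lambda$ is transcendental, $a \approx 174.66$ is algebraic, and crucially $\lambda \geq 291 > a$. The duality relation $\lambda_i(\psi^{-1}) = \lambda_{3-i}(\psi)$, valid for birational self-maps of smooth projective varieties, gives $\psi^{-1}$ the flipped profile $(1,a,\lambda,1)$. Taking the inverse (rather than a second copy of $\psi$) is the key move: the naive product $\psi \times \psi$ produces $\lambda_4 = a^2$, an algebraic intermediate degree, whereas the duality trick replaces $a^2$ by $\lambda a$ at that slot.

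Writing $d = 6 + e$ with $e \geq 0$, I would take
\[ F := \psi \times \psi^{-1} \times \id_{\mathbb{P}^e} : (\mathbb{P}^3)^2 \times \mathbb{P}^e \dashrightarrow (\mathbb{P}^3)^2 \times \mathbb{P}^e \]
and transport it to $\mathbb{P}^d$ via a birational equivalence $(\mathbb{P}^3)^2 \times \mathbb{P}^e \dashrightarrow \mathbb{P}^d$ (obtained, e.g., by matching affine charts $(\mathbb{A}^3)^2 \times \mathbb{A}^e = \mathbb{A}^d$). Since dynamical degrees are birational invariants of dominant rational self-maps, it suffices to verify that every $\lambda_p(F)$ with $1 \leq p \leq d-1$ is transcendental.

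The next step is to apply the product formula $\lambda_p(g_1 \times g_2) = \max_{i+j = p} \lambda_i(g_1)\lambda_j(g_2)$ (known, e.g., from \cite{Tru15}) iteratively. Setting $M(q) := \max_{i+j=q,\,0\leq i,j\leq 3} \lambda_i(\psi)\lambda_j(\psi^{-1})$, the inequality $\lambda > a > 1$ yields
\[ \bigl(M(0),\,M(1),\,M(2),\,M(3),\,M(4),\,M(5),\,M(6)\bigr) \;=\; \bigl(1,\ \lambda,\ \lambda a,\ \lambda^2,\ \lambda a,\ \lambda,\ 1\bigr). \]
Since $\id$ contributes only factors of $1$, one has $\lambda_p(F) = \max\{M(q) : \max(0,p-e)\leq q \leq \min(6,p)\}$. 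For every $1 \leq p \leq 5 + e$, the feasible range of $q$ meets $\{1,2,3,4,5\}$, forcing $\lambda_p(F) \in \{\lambda,\ \lambda a,\ \lambda^2\}$. Each of these is transcendental, being the product of the transcendental number $\lambda$ with a nonzero algebraic factor.

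The main obstacle is ruling out a collapse of the maximum to a pure power of $a$, which would be algebraic. The delicate case is $q = 3$, where the two nontrivial contributions are $\lambda_2(\psi)\lambda_1(\psi^{-1}) = a^2$ and $\lambda_1(\psi)\lambda_2(\psi^{-1}) = \lambda^2$; the former is defeated precisely by $\lambda > a$. Analogous comparisons at $q = 2$ and $q = 4$ show that the dominant $(1,1)$ and $(2,2)$ contributions $\lambda a$ always exceed the competing $a$ and $a^2$ (again via $\lambda > a$). The remaining technical inputs---the product formula and birational invariance for rational self-maps---are standard after \cite{Tru15,Tru20}, and $\psi^{-1}$ has the claimed profile by the adjointness of $\psi^*$ on $N^p$ and $\psi_*$ on $N^{3-p}$.
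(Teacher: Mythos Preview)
Your proof is correct and follows essentially the same route as the paper: form $\psi \times \psi^{-1}$ to exploit the duality $\lambda_i(\psi^{-1}) = \lambda_{3-i}(\psi)$ (so that the dangerous $a^2$ at the middle degree is replaced by $\lambda^2$), apply the product formula and the inequality $\lambda > a$ to obtain the profile $(1,\lambda,\lambda a,\lambda^2,\lambda a,\lambda,1)$, and transfer to projective space by birational invariance. The only cosmetic difference is that the paper handles $d=6$ first and then inducts by crossing with $\id_{\mathbb{P}^1}$ one step at a time, whereas you cross with $\id_{\mathbb{P}^e}$ in a single shot; the resulting map and its dynamical degrees are the same (as the paper itself notes in the remark following the proof).
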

 This theorem is proved in Section \ref{proof of theorem a}.\par
 \cite[Question 1.6]{BDJK24} presents another question about dynamical degrees.
 \begin{question}[{\cite[Question 1.6]{BDJK24}}]
  Does there exist a birational map $f:\mathbb{P}^d\dashrightarrow\mathbb{P}^d$ for which $\lambda_1(f)$ is transcendental and also maximal among intermediate dynamical degrees $\lambda_i(f)$?
 \end{question}
 This question is resolved in \cite[Remark 4.6]{Sug24} by using Theorem \ref{1-cohomologically hyperboic}, but the first dynamical degree of the constructed birational map is maximal, but not strictly maximal among the dynamical degrees.\medskip\\
 \noindent
 {\bf Acknowledgements.} The author thanks Professor Keiji Oguiso for several advice on this paper.

\section{Preliminaries}
 There are some properties on dynamical degrees.\par
 \begin{proposition}\label{inverse map}
  Let $X$ be a smooth projective variety over an algebraic closed field of characteristic $0$, of dimension $N$ and let $f:X\dashrightarrow X$ be a birational self-map.
  Then,
  \begin{align*}
   \lambda_p(f)&=\lim_{n\to\infty}((f^n)^{*}(H^{p})\cdot H^{N-p})^{\frac{1}{n}}\\
               &=\lim_{n\to\infty}((H^{p} \cdot (f^n)_{*}(H^{N-p}))^{\frac{1}{n}}\\
               &=\lim_{n\to\infty}((H^{p} \cdot (f^{-n})^{*}(H^{N-p})))^{\frac{1}{n}}\\
               &=\lambda_{N-p}(f^{-1})
  \end{align*}\par
  for $0\leq p\leq N$.
 \end{proposition}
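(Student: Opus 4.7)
The first equality is immediate: it is simply the definition of $\lambda_p(f)$ recalled in the introduction. So the substance of the proposition is the chain of three further equalities, all of which I would derive from two essentially formal facts about the graph construction of pullback and pushforward: (i) adjointness with respect to the intersection pairing in complementary codimensions, and (ii) interchangeability of the two projections from the graph upon inverting $f$.

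My plan for the second equality is to apply adjointness iteration by iteration. Given $g:=f^n$ with closed graph $\Gamma_g\subset X\times X$ and projections $\pi_1,\pi_2:\Gamma_g\to X$, one has by definition $g^{*}\alpha=(\pi_1)_{*}(\pi_2^{*}\alpha)$ and $g_{*}\beta=(\pi_2)_{*}(\pi_1^{*}\beta)$. Using the projection formula on $\Gamma_g$ together with the fact that pushing the class $\pi_2^{*}\alpha\cdot\pi_1^{*}\beta$ down to a point along either $\pi_1$ or $\pi_2$ (and then to $\mathrm{Spec}\,k$) yields the same number, I obtain
\begin{align*}
 g^{*}\alpha\cdot\beta=(\pi_1)_{*}(\pi_2^{*}\alpha\cdot\pi_1^{*}\beta)=(\pi_2)_{*}(\pi_2^{*}\alpha\cdot\pi_1^{*}\beta)=\alpha\cdot g_{*}\beta.
\end{align*}
Specialising $g=f^n$, $\alpha=H^p$, $\beta=H^{N-p}$ gives the second equality.

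For the third equality, I would note that $\Gamma_{f^{-n}}$ is the image of $\Gamma_{f^n}$ under the coordinate swap $\tau:X\times X\to X\times X$, and that this swap interchanges the two projections. Therefore the formula defining $(f^{-n})^{*}=\bigl((f^n)^{-1}\bigr)^{*}$ is literally the same as the formula defining $(f^n)_{*}$; one applies this identity to $\beta=H^{N-p}$. (Note that I use $(f^n)_{*}$ and $(f^{-n})^{*}$ understood as defined directly from the graphs of $f^n$ and $f^{-n}$, not as iterates of $f_{*}$ or $(f^{-1})^{*}$, which is what the notation in the statement already suggests.) Finally, the fourth equality is just the definition of the $(N-p)$-th dynamical degree of $f^{-1}$, combined with the commutativity of the intersection product $H^p\cdot(f^{-n})^{*}(H^{N-p})=(f^{-n})^{*}(H^{N-p})\cdot H^p$.

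The only step that is not essentially bookkeeping is the adjointness identity, and even there the main point is simply that two pushforwards to $\mathrm{Spec}\,k$ along $\pi_1$ and $\pi_2$ agree, which is automatic. Therefore I do not expect a genuine obstacle; the proof is a direct unwinding of the graph-theoretic definitions of $f^{*}$ and $f_{*}$, together with the elementary observation that inverting a birational map corresponds to swapping the two factors of $X\times X$.
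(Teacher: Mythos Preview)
Your proposal is correct and follows essentially the same route as the paper: both treat the first and last equalities as definitions, derive the third from the observation that inverting a birational map swaps the two projections from its graph (so $(f^n)_*=(f^{-n})^*$), and obtain the second via the projection-formula adjointness $f^{*}\beta\cdot\alpha=\beta\cdot f_{*}\alpha$ computed on the graph. The only cosmetic difference is that you work with the projections $\pi_1,\pi_2$ from $\Gamma_g$ itself, whereas the paper uses the projections from the ambient $X\times X$ together with intersection by $[\Gamma_f]$; these are equivalent formulations of the same computation.
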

 This result is well-known but we do not know the reference, and the outline of the proof is almost written in \cite[Section 2.1]{Sug24} in the case $\mathrm{dim}(X)=3$.
 We give the proof again at here.
 \begin{proof}[Proof of Proposition \ref{inverse map}]
  The first equation and the fourth equation are derived from the definition of the dynamical degrees.
  The third equiation is the definition of the pushforward and the pullback of cycles.\par
  It remains to confirm the second equation.
  Denote $\pi:X\times X\rightarrow X$.
  Let $\Gamma_f\subset X\times X$ be the closure of the graph of $f$ and let $\pi_1:X\rightarrow \mathrm{Spec}(k)$ be the structure map.
  Take $\alpha\in A_p(X)$ and $\beta\in A_q(X)$ with $p+q=\mathrm{dim}(X)$.
  By definition, $f^{*}\beta:={\pi}_{*}([\Gamma_f]\cdot\pi^{*}\beta)$ and $f_{*}\alpha:={\pi}_{*}([\Gamma_f]\cdot\pi^{*}\alpha)$ (cf.\ \cite[Definition 16.1.2]{Ful98}).
  Then, the intersection of cycles can be calculated as
  \begin{align*}
   {\pi_1}_{*}(f^{*}(\beta)\cdot\alpha)&={\pi_1}_{*}({\pi}_{*}([\Gamma_f]\cdot{\pi}^{*}\beta)\cdot\alpha)\\
                                     &=({\pi_1}_{*}\circ{\pi}_{*})(([\Gamma_f]\cdot{\pi}^{*}\beta)\cdot{\pi}^{*}\alpha)\\
                                     &=({\pi_1}_{*}\circ{\pi}_{*})({\pi}^{*}\beta\cdot([\Gamma_f]\cdot{\pi}^{*}\alpha))\\
                                     &={\pi_1}_{*}(\beta\cdot{\pi}_{*}([\Gamma_f]\cdot{\pi}^{*}\alpha))\\
                                     &={\pi_1}_{*}(\beta\cdot f_{*}(\alpha))
  \end{align*}\medskip 
 \end{proof}
 \begin{proposition}[{cf.\ \cite[Corollaire 7]{DS05}, \cite[Corollary 1.2]{DN11}, \cite[Lemma 3.7]{Tru15}}]\label{birational conjugate}
  Let $X$ and $Y$ be smooth projective varieties of the same dimension $n$ over an algebraic closed field of characteristic $0$.
  Let $f:X\dashrightarrow X$ and $g:Y\dashrightarrow Y$ be dominant rational maps and let $\pi:X\dashrightarrow Y$ be a birational map with $\pi\circ f=g\circ\pi$.
  \begin{figure}[h]
   \includegraphics[scale=0.3]{birational_conjugate.jpg}
   \label{figure}
  \end{figure}\\
  Then, $\lambda_p(f)=\lambda_p(g)$ for all $0\leq p\leq n$.
 \end{proposition}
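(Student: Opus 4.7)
The plan is to show that the linear operators $(f^n)^{\ast}$ on $N^p(X)_\mathbb{R}$ and $(g^n)^{\ast}$ on $N^p(Y)_\mathbb{R}$ have norms that differ only by bounded multiplicative factors independent of $n$, so that their $n$-th roots share a common limit. I would fix ample divisors $H_X$ on $X$ and $H_Y$ on $Y$, and simultaneously resolve the indeterminacies of both $\pi$ and $\pi^{-1}$ by choosing a smooth projective variety $Z$ with birational morphisms $p\colon Z \to X$ and $q\colon Z \to Y$ satisfying $q = \pi \circ p$ on an open dense subset of $Z$. This induces bounded linear maps $\pi^{\ast} := p_{\ast} \circ q^{\ast} \colon N^p(Y)_\mathbb{R} \to N^p(X)_\mathbb{R}$ and $\pi_{\ast} := q_{\ast} \circ p^{\ast} \colon N^p(X)_\mathbb{R} \to N^p(Y)_\mathbb{R}$, which are mutually inverse on $N^p$ by the projection formula and the fact that $p,q$ are birational morphisms (the exceptional loci are of positive codimension and contribute nothing to the numerical classes obtained via $p_{\ast}$ and $q_{\ast}$).

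Next, I would exploit the commutation $g^n \circ \pi = \pi \circ f^n$, which holds as an equality of rational maps for every $n \geq 1$. Combining this with the graph-theoretic definition of pullback recalled in the introduction and the projection formula on $Z$, one derives an inequality
\begin{align*}
\|(g^n)^{\ast}\|_{N^p(Y)_\mathbb{R}} \leq C \, \|(f^n)^{\ast}\|_{N^p(X)_\mathbb{R}},
\end{align*}
with constant $C > 0$ depending only on $\pi$, $H_X$, $H_Y$, and $p$, but not on $n$. The reverse inequality follows by symmetry upon interchanging the roles of $X,Y$ and of $f,g$. Taking $n$-th roots and passing to the limit then yields $\lambda_p(f) = \lambda_p(g)$, with the boundary cases $p = 0$ and $p = n$ being immediate from $\lambda_0 = 1$ and Proposition \ref{inverse map}.

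The principal obstacle is the well-known phenomenon that pullback of cycles does not respect composition of rational maps: $(g \circ \pi)^{\ast} \neq \pi^{\ast} \circ g^{\ast}$ in general. Consequently the naive identity $\pi^{\ast} \circ (g^n)^{\ast} = (f^n)^{\ast} \circ \pi^{\ast}$ fails on the nose, and one cannot simply conjugate operators and read off equality of norms. Following Dinh--Sibony, the fix is to compare only the operator norms on numerical groups: the defect between the two sides of the conjugation identity is controlled by classes supported on the indeterminacy loci of the iterates, which have codimension at least one, and when paired against ample classes this defect is absorbed into a multiplicative constant independent of $n$. Such a constant becomes invisible under the $n$-th-root limit, and the desired equality of dynamical degrees follows.
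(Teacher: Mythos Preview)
The paper does not supply its own proof of this proposition; it is quoted from the cited references \cite{DS05}, \cite{DN11}, \cite{Tru15}, so there is nothing in the paper to compare your argument against directly. Your outline is, in spirit, the argument found in those references: pass to a common resolution, compare the operator norms of $(f^n)^{\ast}$ and $(g^n)^{\ast}$ up to multiplicative constants independent of $n$, and take $n$-th roots.

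One inaccuracy worth flagging: your assertion that $\pi^{\ast}=p_{\ast}q^{\ast}$ and $\pi_{\ast}=q_{\ast}p^{\ast}$ are \emph{mutually inverse} on $N^p$ is false in general. Already for the standard quadratic Cremona involution $\sigma$ on $\mathbb{P}^2$, both $\sigma^{\ast}$ and $\sigma_{\ast}$ act as multiplication by $2$ on $N^1(\mathbb{P}^2)_{\mathbb{R}}\cong\mathbb{R}$, so their composite is multiplication by $4$, not the identity. The exceptional loci \emph{do} contribute to intermediate numerical groups; what vanishes under $p_{\ast}$ or $q_{\ast}$ is only the part supported on the exceptional locus of that particular morphism, not of the other one. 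Fortunately your argument does not actually rely on invertibility: the subsequent steps only need that $\pi^{\ast}$ and $\pi_{\ast}$ are bounded linear maps whose norms can be absorbed into the constant $C$, and that is correct. Once you delete the ``mutually inverse'' sentence, the sketch matches the standard proof.
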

 By Proposition \ref{birational conjugate}, the dynamical degrees are invariant by a birational conjugate, and therefore the set
 \begin{align*}
  \mathrm{DynBir}_p(X):=\{\lambda_p(f)\mid f:X\dashrightarrow X \text{ is a birational map}\}\subset\mathbb{R}_{\geq1}
 \end{align*}
 is a birationally invariant set for projective varieties for each $0\leq p\leq\mathrm{dim}(X)$.
 \begin{proposition}[{cf.\ \cite[Theorem 1.4]{Tru15}, \cite[Example 3.7]{DN11}}]\label{product formula}
  Let $X=X_1\times X_2$ be the product of two smooth projective varieties over an algebraically closed field $k$ with characteristic $0$, with $\mathrm{dim}(X_i)=n_i$.
  Let $f_1, f_2$ are dominant rational self-map of $X_1, X_2$ and define $f:=f_1\times f_2:X_1\times X_2\dashrightarrow X_1\times X_2$.\par
  Then, 
  \begin{align*}
   \lambda_l(f)=\underset{\substack{p+q=l \\ 0\leq p\leq n_1\\ 0\leq q\leq n_2}}{\max}\lambda_p(f_1)\lambda_q(f_2).
  \end{align*}
 \end{proposition}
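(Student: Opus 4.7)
The plan is to compute $(f^n)^*(H^l)\cdot H^{n_1+n_2-l}$ directly by exploiting the product structure, using a K\"unneth-compatible ample class. Write $\pi_i\colon X_1\times X_2\to X_i$ for the projections and pick ample divisors $H_i$ on $X_i$. Then $H:=\pi_1^*H_1+\pi_2^*H_2$ is ample on $X:=X_1\times X_2$ and can serve as the reference class in the definition of $\lambda_l$.

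First I would expand $H^l$ and $H^{n_1+n_2-l}$ by the binomial theorem, and note that $\pi_1^*\alpha\cdot\pi_2^*\beta$ has nonzero degree on $X$ only when $\alpha$ and $\beta$ are top-dimensional on their respective factors; this forces the only surviving cross-terms to be of bidegree $(p,n_1-p)$ on the $X_1$-side and $(l-p,n_2-l+p)$ on the $X_2$-side. Next I would use $f^n=f_1^n\times f_2^n$ together with the product compatibility
\begin{align*}
 (f_1^n\times f_2^n)^*(\pi_1^*\alpha\cdot\pi_2^*\beta)=\pi_1^*(f_1^n)^*\alpha\cdot\pi_2^*(f_2^n)^*\beta
\end{align*}
to separate the pullback into pullbacks on each factor. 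Combining these ingredients, the target intersection number takes the form
\begin{align*}
 (f^n)^*(H^l)\cdot H^{n_1+n_2-l}=\sum_{\substack{p+q=l\\0\leq p\leq n_1\\0\leq q\leq n_2}}\binom{l}{p}\binom{n_1+n_2-l}{n_1-p}\,c_p^{(n)}\,d_q^{(n)},
\end{align*}
with $c_p^{(n)}:=(f_1^n)^*(H_1^p)\cdot H_1^{n_1-p}$ and $d_q^{(n)}:=(f_2^n)^*(H_2^q)\cdot H_2^{n_2-q}$. Since every summand is nonnegative and the number of summands is bounded independently of $n$, extracting $n$-th roots and letting $n\to\infty$ swallows the binomial factors and yields $\lambda_l(f)=\max_{p+q=l}\lambda_p(f_1)\lambda_q(f_2)$, as claimed.

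The main obstacle I expect is the displayed compatibility above, which concerns rational rather than regular maps. Pullback of a rational map is defined via its graph, and the graph of a product is not literally the product of the graphs once indeterminacies enter, so the naive functoriality $(g\circ h)^*=h^*\circ g^*$ may fail. I would address this by passing to a common smooth model $Z$ dominating $X_1\times X_2$, the closures of the graphs of $f_1^n$ and $f_2^n$, and the closure of the graph of $f^n$, then applying the projection formula on $Z$ and using Proposition \ref{birational conjugate} to descend the identity back down to $X_1\times X_2$. The nonnegativity of the $c_p^{(n)}$ and $d_q^{(n)}$, arising because they are intersection numbers of effective classes with nef powers of $H_i$, both furnishes the termwise lower bound $\lambda_l(f)\ge\lambda_p(f_1)\lambda_q(f_2)$ for each admissible $(p,q)$ and, combined with the finite number of summands, supplies the matching upper bound.
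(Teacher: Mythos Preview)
The paper does not prove this proposition; it is quoted with references to \cite[Theorem~1.4]{Tru15} and \cite[Example~3.7]{DN11} and then used as a black box. Your sketch is the standard direct argument for the split case $f=f_1\times f_2$ that underlies those references: take $H=\pi_1^*H_1+\pi_2^*H_2$, expand binomially, separate the pullback across the two factors, and pass to the limit using nonnegativity of the summands together with the bounded number of terms.

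One comment on the obstacle you single out. For a \emph{product} map the situation is cleaner than you fear: after reordering the factors, the closure of the graph of $f_1^n\times f_2^n$ inside $(X_1\times X_2)\times(X_1\times X_2)$ is exactly $\Gamma_{f_1^n}\times\Gamma_{f_2^n}$, because a nonempty open subset of an irreducible variety is dense and a product of closed subvarieties is closed. The compatibility
\[
 (f_1^n\times f_2^n)^*(\pi_1^*\alpha\cdot\pi_2^*\beta)=\pi_1^*(f_1^n)^*\alpha\cdot\pi_2^*(f_2^n)^*\beta
\]
then follows directly from the behaviour of the exterior product under proper pushforward and flat pullback (cf.\ \cite[\S1.10]{Ful98}); there is no need to route through Proposition~\ref{birational conjugate}. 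With that point settled, your displayed expression for $(f^n)^*(H^l)\cdot H^{n_1+n_2-l}$ and the subsequent $n$-th-root argument are correct.
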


\section{Proof of Theorem \ref{Main theorem}}\label{proof of theorem a}
 In this section, we prove Theorem \ref{Main theorem}.
 First, we prove the next proposition.
 \begin{proposition}\label{n=6}
  there exists a biratinal map $\Psi:\mathbb{P}^6\dashrightarrow\mathbb{P}^6$ whose intermediate dynamical degrees are all transcendental.
 \end{proposition}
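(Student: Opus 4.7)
The plan is to realize $\Psi$ as a birational conjugate of the product map $\psi \times \psi^{-1}$ on $\mathbb{P}^3 \times \mathbb{P}^3$, where $\psi$ is the birational self-map of $\mathbb{P}^3$ from Theorem \ref{1-cohomologically hyperboic}. Recall that its dynamical degrees are $\lambda_0(\psi) = \lambda_3(\psi) = 1$, $\lambda_1(\psi) = \lambda$ (transcendental, with $291 \leq \lambda \leq 669$), and $\lambda_2(\psi) = a$ (algebraic, with $a \approx 174.67$); in particular $\lambda > a > 1$. By Proposition \ref{inverse map}, the dynamical degrees of $\psi^{-1}$ are $(1, a, \lambda, 1)$.

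First I would set $f := \psi \times \psi^{-1}$ on $\mathbb{P}^3 \times \mathbb{P}^3$ and apply Proposition \ref{product formula} to compute $\lambda_l(f)$ for $1 \leq l \leq 5$. A short case analysis, using $\lambda > a > 1$, yields
\begin{align*}
\lambda_1(f) = \lambda,\quad \lambda_2(f) = \lambda a,\quad \lambda_3(f) = \lambda^2,\quad \lambda_4(f) = \lambda a,\quad \lambda_5(f) = \lambda.
\end{align*}
Each of these is transcendental: $\lambda$ and $\lambda^2$ clearly so, and $\lambda a$ because $a$ is a nonzero algebraic number while $\lambda$ is transcendental.

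Second, I would transfer the picture to $\mathbb{P}^6$. Since $\mathbb{P}^3 \times \mathbb{P}^3$ is a rational variety of dimension $6$, fix any birational map $\pi : \mathbb{P}^3 \times \mathbb{P}^3 \dashrightarrow \mathbb{P}^6$ (for instance, the one induced by an affine chart $\mathbb{A}^3 \times \mathbb{A}^3 \cong \mathbb{A}^6$) and set $\Psi := \pi \circ f \circ \pi^{-1}$. Proposition \ref{birational conjugate} then yields $\lambda_p(\Psi) = \lambda_p(f)$ for every $p$, completing the proof.

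The one subtle point to watch, and arguably the main obstacle, is the need to twist the second factor by the inverse rather than use a second copy of $\psi$. For $\psi \times \psi$, the $(p,q)=(2,2)$ contribution at $l = 4$ would be $a^2 \approx 30509$, which beats the other candidate $\lambda \leq 669$, so $\lambda_4(\psi \times \psi) = a^2$ would be algebraic. Replacing the second factor by $\psi^{-1}$ swaps the positions of $\lambda$ and $a$ in that factor, so that the $(2,2)$ contribution to $\lambda_4(f)$ becomes $a \cdot \lambda$ rather than $a \cdot a$; the same swap is what ensures that every intermediate degree ends up as $\lambda$, $\lambda a$, or $\lambda^2$ rather than as a pure power of $a$.
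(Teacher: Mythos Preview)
Your proof is correct and follows essentially the same route as the paper: take $\psi\times\psi^{-1}$ on $\mathbb{P}^3\times\mathbb{P}^3$, compute its dynamical degrees via the product formula to get $(1,\lambda,a\lambda,\lambda^2,a\lambda,\lambda,1)$, and then conjugate by a birational identification $\mathbb{P}^3\times\mathbb{P}^3\dashrightarrow\mathbb{P}^6$. The only cosmetic difference is that the paper writes down an explicit birational map $\Phi$ rather than invoking rationality abstractly, and your closing paragraph explaining why $\psi\times\psi$ would fail at $l=4$ is a helpful addition not present in the paper.
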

 \begin{proof}
  Let $\psi:\mathbb{P}^3\dashrightarrow\mathbb{P}^3$ be a birational map, which is presented in Theorem \ref{1-cohomologically hyperboic}.
  Then,
  \begin{align*}
   \lambda_0(\psi)=1, \lambda_1(\psi)=\lambda, \lambda_2(\psi)=a, \lambda_3(\psi)=1,
  \end{align*}
  where $\lambda$ is trandscendental, $a$ is algebraic, and $\lambda>a$.\par
  $\psi^{-1}:\mathbb{P}^3\dashrightarrow\mathbb{P}^3$ is also a birational map, and
  \begin{align*}
   \lambda_0(\psi^{-1})=1, \lambda_1(\psi^{-1})=a, \lambda_2(\psi^{-1})=\lambda, \lambda_3(\psi^{-1})=1,
  \end{align*}
  by Proposition \ref{inverse map}.\par
  Define the birational map $\Psi':=\psi\times\psi^{-1}:\mathbb{P}^3\times\mathbb{P}^3\dashrightarrow\mathbb{P}^3\times\mathbb{P}^3$.
  Then, by Proposition \ref{product formula},
  \begin{align*}
   \lambda_0(\Psi')=1, \lambda_1(\Psi')=\lambda, \lambda_2(\Psi')=a\lambda, \lambda_3(\Psi')=\lambda^2, \lambda_4(\Psi')=a\lambda, \lambda_5(\Psi')=\lambda, \lambda_6(\Psi')=1
  \end{align*}
  from $\lambda>a$.\par
  Define the birational map
  \begin{align*}
  \begin{array}{cccc}
    \Phi:&\mathbb{P}^3\times\mathbb{P}^{3}&\dashrightarrow&\mathbb{P}^6\\
         & ([x_0:x_1:x_2:x_3],\ [y_0:y_1:y_2:y_3]) & \mapsto & [x_0y_0:x_1y_0:x_2y_0:x_3y_0:x_3y_1:x_3y_2:x_3y_3]
  \end{array}
  \end{align*}
  and define $\Psi:=\Phi\circ\Psi'\circ\Phi^{-1}:\mathbb{P}^6\dashrightarrow\mathbb{P}^6$.\par
  $\Psi$ is a birational map and by Proposition \ref{birational conjugate}, the dynamical degrees are same as $\Psi'$.
  Thus,
  \begin{align*}
   \lambda_0(\Psi)=1, \lambda_1(\Psi)=\lambda, \lambda_2(\Psi)=a\lambda, \lambda_3(\Psi)=\lambda^2, \lambda_4(\Psi)=a\lambda, \lambda_5(\Psi)=\lambda, \lambda_6(\Psi)=1.
  \end{align*}
  Now $\lambda$ is a transcendental number and $a$ is an algebraic integer, so the intermediate dynamical degrees of $\Psi$ are all transcendental.
 \end{proof}
 \begin{proof}[Proof of Theorem \ref{Main theorem}]
  We prove this theorem by induction in $n$.\par
  If $n=6$, the theorem is proved by Proposition \ref{n=6}.\par
  Assume the theorem holds for $n=n_0$.
  Let $g:\mathbb{P}^{n_0}\dashrightarrow\mathbb{P}^{n_0}$ be a birational map, whose intermediate dynamical degrees are transcendental.\par
  Denote $\lambda_i(g)=\lambda_i$ ($0\leq i\leq n_0$).
  $g\times\id$ is a birational map and the dynamical degrees of $g\times\id:\mathbb{P}^{n_0}\times\mathbb{P}^1\dashrightarrow\mathbb{P}^{n_0}\times\mathbb{P}^1$ can be calculated by using Proposition \ref{product formula} as
  \begin{align*}
   \lambda_0(g\times\id)&=\lambda_0=1\\
   \lambda_1(g\times\id)&=\mathrm{max}\{\lambda_0,\lambda_1\}=\lambda_1\\
   \lambda_2(g\times\id)&=\mathrm{max}\{\lambda_1,\lambda_2\}\\
   &\ \,\vdots\\
   \lambda_{i}(g\times\id)&=\mathrm{max}\{\lambda_{i-1},\lambda_{i}\}\\
   &\ \,\vdots\\
   \lambda_{n_0-1}(g\times\id)&=\mathrm{max}\{\lambda_{n_0-2},\lambda_{n_0-1}\}\\
   \lambda_{n_0}(g\times\id)&=\mathrm{max}\{\lambda_{n_0-1},\lambda_{n_0}\}=\lambda_{n_0-1}\\
   \lambda_{n_0+1}(g\times\id)&=\lambda_{n_0}=1.
  \end{align*}
  Thus, the intermediate dynamical degrees of $g\times\id$ are all transcendental.
  Define the birational map
  \begin{align*}
  \begin{array}{cccc}
    h:&\mathbb{P}^{n_0}\times\mathbb{P}^{1}&\dashrightarrow&\mathbb{P}^{n_0+1}\\
      & ([x_0:x_1:\cdots:x_{n_0}],\ [y_0:y_1]) & \mapsto & [x_0y_0:x_1y_0:\cdots:x_{n_0}y_0:x_{n_0}y_1]
  \end{array}
  \end{align*}
  and define $g'=h\circ g\circ h^{-1}:\mathbb{P}^{n_0+1}\dashrightarrow\mathbb{P}^{n_0+1}$.
  Now $g'$ is a birational map and by Proposition \ref{birational conjugate}, the dynamical degrees are same as $g\times\id$, and this is the case for $n=n_0+1$.
  Therefore, all intermediate dynamical degrees of $g'$ are transcendental, and the proof is done. 
 \end{proof}
 \begin{remark}
  By using the induction in the proof of Theorem \ref{Main theorem} for $\Psi:\mathbb{P}^6\dashrightarrow\mathbb{P}^6$, we get the birational map $g:\mathbb{P}^d\dashrightarrow\mathbb{P}^d$ ($d\geq6$) with
  \begin{gather*}
   \lambda_0(g)=1, \lambda_1(g)=\lambda, \lambda_2(g)=a\lambda, \lambda_3(g)=\cdots=\lambda_{d-3}(g)=\lambda^2,\\
   \lambda_{d-2}(g)=a\lambda, \lambda_{d-1}(g)=\lambda, \lambda_d(g)=1.
  \end{gather*}
 \end{remark}
 \begin{remark}
  \cite[Section 1]{BDJ21} shows that there exists a dominant rational map $f:\mathbb{P}^2\dashrightarrow\mathbb{P}^2$ such that
  \begin{align*}
   \lambda_0(f)=1, \lambda_1(f)=6.8575574092\cdots, \lambda_2(f)=5
  \end{align*}
  with transcendental $\lambda_1(f)$.
  By applying the induction in the proof of Theorem \ref{Main theorem} for $f$, there exists a dominant rational map $\mathbb{P}^d\dashrightarrow\mathbb{P}^d$ such that its intermediate dynamical degrees are all transcendental, for any $d\geq2$.
 \end{remark}

\renewcommand{\refname}{References}

\end{document}